\newcommand{\Hom}{\mathop{\mathrm{Hom}}\nolimits} 
\documentclass[11pt]{amsart} 
\usepackage{amssymb}
\topmargin 0pt
\advance \topmargin by -\headheight
\advance \topmargin by -\headsep
\textheight 8.9in
\oddsidemargin 0pt
\evensidemargin \oddsidemargin
\marginparwidth 0.5in
\textwidth 6in

\pagestyle{plain}
\newtheorem{theorem}{Theorem}[section] 
\newtheorem{lemma}[theorem]{Lemma}
\newtheorem{corollary}[theorem]{Corollary}
\newtheorem{proposition}[theorem]{Proposition}
\theoremstyle{remark} 
\newtheorem{remark}[theorem]{Remark}
\newtheorem{example}[theorem]{Example}
\title{A remark on 0-cycles as modules over algebras of finite correspondences} 
\author{M.Rovinsky} 
\address{HSE University 
(AG Laboratory, HSE, 6 Usacheva str., Moscow, Russia, 119048)
\& Institute for Information Transmission Problems of Russian Academy of Sciences}
\email{marat@mccme.ru}
\begin{document} 
\begin{abstract} Given a smooth projective variety $X$ over a field, consider the $\mathbb Q$-vector 
space $Z_0(X)$ of 0-cycles (i.e. formal finite $\mathbb Q$-linear combinations of the closed points of 
$X$) as a module over the algebra of finite correspondences. Then the rationally trivial 0-cycles 
on $X$ form an absolutely simple and essential submodule of $Z_0(X)$. \end{abstract}

\maketitle 

Let $k$ be a field. There are several ways and their versions in which the zero-cycles on $k$-schemes 
of finite type can be considered as a functor. In each of these versions, we want this functor 
to be an object of an abelian category, and we study its structure (``composition series''). 

Consider a set $S$ of smooth projective varieties over a fixed field. Let $Z_0(S)$ be 
the direct sum of the $\mathbb Q$-vector spaces of 0-cycles (i.e. formal finite linear 
combinations of the closed points) on varieties in $S$ with rational coefficients. 

We consider $Z_0(S)$ as a module over the algebra of {\sl finite correspondences}. 

The aim of this note is to show that 
the rationally trivial 0-cycles form an absolutely simple submodule 
$Z_0^{\mathrm{rat}}(S)$ of the module $Z_0(S)$, which is contained in all non-zero submodules of $Z_0(S)$. 
To some extent, this is analogous to the minimality of the rational equivalence among all `adequate' 
equivalence relations on algebraic cycles, cf. \cite[Proposition 8]{P.Samuel}. 

Assuming the Beilinson--Bloch motivic filtration conjecture, we show that the radical filtration on 
$Z_0(S)/Z_0^{\mathrm{rat}}(S)$ is an evident modification of the conjectural motivic filtration 
on Chow groups of 0-cycles. This is checked unconditionally in the case of curves. 

In the last section, a point of view on the 0-cycles on smooth, but not necessarily proper, varieties 
as a {\sl cosheaf} in an appropriate topology is briefly discussed. 

\section{Category algebras and non-degenerate modules} A category $\mathcal C$ is called 
{\sl preadditive} if, for each pair of objects $X,Y$, the morphism set $\mathrm{Hom}_{\mathcal C}(X,Y)$ 
is endowed with an abelian group structure, while the morphism composition maps 
$\circ_{X,Y,Z}:\mathrm{Hom}_{\mathcal C}(X,Y)\times\mathrm{Hom}_{\mathcal C}(Y,Z)\to
\mathrm{Hom}_{\mathcal C}(X,Z)$ are bilinear for all objects $X,Y,Z$. 

For any small preadditive category $\mathcal C$, set 
$A_{\mathcal C}:=\bigoplus_{X,Y\in\mathcal C}\Hom_{\mathcal C}(X,Y)$. 

The composition pairings $\circ_{X,Y,Z}$ (and the zero pairings between the groups $\Hom_{\mathcal C}(W,X)$ 
and $\Hom_{\mathcal C}(Y,Z)$ 
for all quadruples $W,X,Y,Z$ with $X\neq Y$) induce an associative ring structure 
on the abelian group $A_{\mathcal C}$. 

The ring $A_{\mathcal C}$ is unital if and only if there are only finitely many objects in $\mathcal C$. 
However, even if $A_{\mathcal C}$ is not unital, it is {\sl idempotented} (in the sense of 
\cite[Definition 4]{J.Bernstein}), i.e. for every finite collection $B$ of elements of $A_{\mathcal C}$ 
there is an idempotent $e\in A_{\mathcal C}$ such that $ea=a$ for all $a\in B$. Namely the sums of 
identities $\mathrm{id}_X\in\Hom_{\mathcal C}(X,X)\subseteq A_{\mathcal C}$ 
for all objects $X$ in a finite set containing the union of the supports of the elements of $B$. 
(By definition, the {\sl support} of an element $a$ is the smallest set $\mathrm{Supp}(a)$ 
such that $a\in\bigoplus_{X,Y\in\mathrm{Supp}(a)}\Hom_{\mathcal C}(X,Y)\subseteq A_{\mathcal C}$.) 

Recall, cf. e.g. \cite[p.113]{P.Cartier}, that a left module $M$ over an associative ring $A$ 
is called {\sl non-degenerate} if $AM=M$. 
Obviously, $A_{\mathcal C}$ is a non-degenerate left $A_{\mathcal C}$-module. 

Denote by $\mathrm{Mod}_{\mathcal C}$ the category of {\sl non-degenerate} left $A_{\mathcal C}$-modules. 

Denote by $\mathcal C^{\vee}$ the category of additive functors from $\mathcal C$ to the category of abelian groups. 

\begin{lemma}[Morita equivalence] If $\mathcal C$ is a small preadditive category then 
$\mathcal C^{\vee}$ and $\mathrm{Mod}_{\mathcal C}$ are equivalent abelian categories. 
In particular, if two small preadditive categories $\mathcal C$ and $\mathcal C'$ are equivalent then 
the categories $\mathrm{Mod}_{\mathcal C}$ and $\mathrm{Mod}_{\mathcal C'}$ are equivalent as well. \end{lemma}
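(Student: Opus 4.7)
The plan is to prove this by constructing explicit quasi-inverse functors $\Phi:\mathcal{C}^{\vee}\to\mathrm{Mod}_{\mathcal{C}}$ and $\Psi:\mathrm{Mod}_{\mathcal{C}}\to\mathcal{C}^{\vee}$ following the standard category-algebra recipe. For $F\in\mathcal{C}^{\vee}$, I would set $\Phi(F):=\bigoplus_{X\in\mathcal{C}}F(X)$, with a morphism $f\in\Hom_{\mathcal{C}}(X,Y)$ acting as $F(f)$ on the summand $F(X)$ and by zero on all other summands, extended $\mathbb{Z}$-linearly to $A_{\mathcal{C}}$. Associativity of this action is immediate from functoriality of $F$ together with the convention that a product of two morphisms in $A_{\mathcal{C}}$ vanishes unless they are composable; non-degeneracy follows because each $m\in F(X)$ is fixed by the idempotent $e_X:=\mathrm{id}_X\in A_{\mathcal{C}}$. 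On morphisms, a natural transformation $F\to G$ goes to the direct sum of its components, which is $A_{\mathcal{C}}$-linear by naturality.

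Conversely, for $M\in\mathrm{Mod}_{\mathcal{C}}$, I would define $\Psi(M):X\mapsto e_X M$, with a morphism $f\in\Hom_{\mathcal{C}}(X,Y)$ acting by left multiplication $e_X M\to e_Y M$; this lands in $e_Y M$ because $e_Y f=f$ in $A_{\mathcal{C}}$, is additive in $f$ by bilinearity of the ring structure, and is functorial by associativity of the module action. The key verification is that the canonical map $\bigoplus_X e_X M\to M$ is an isomorphism. Orthogonality $e_X e_Y=\delta_{XY}e_X$ gives injectivity, while non-degeneracy plus the idempotented structure gives surjectivity: any $m\in M$ is a finite sum $\sum_i a_i m_i$, and each $a_i$ lies in a block $\bigoplus_{X,Y\in S_i}\Hom_{\mathcal{C}}(X,Y)$, so $a_i m_i\in\sum_{Y\in S_i}e_Y M$.

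Once this decomposition is in hand, the natural isomorphisms $\Phi\Psi\simeq\mathrm{id}$ and $\Psi\Phi\simeq\mathrm{id}$ are essentially tautological. Since kernels, cokernels and (co)products are computed pointwise in $\mathcal{C}^{\vee}$ and componentwise (after multiplying by the $e_X$) in $\mathrm{Mod}_{\mathcal{C}}$, both functors are automatically additive and exact, so we obtain an equivalence of abelian categories. The ``in particular'' statement is then formal: an equivalence $\mathcal{C}\simeq\mathcal{C}'$ of preadditive categories induces by precomposition an equivalence $(\mathcal{C}')^{\vee}\simeq\mathcal{C}^{\vee}$, which transports through the two Morita equivalences to $\mathrm{Mod}_{\mathcal{C}'}\simeq\mathrm{Mod}_{\mathcal{C}}$.

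The only genuine subtlety in the argument is handling the non-unital nature of $A_{\mathcal{C}}$: one must use the idempotents $e_X$ together with the non-degeneracy hypothesis to recover $M$ as $\bigoplus_X e_X M$. This is also what forces the choice of \emph{non-degenerate} modules on the algebra side, since for a general $A_{\mathcal{C}}$-module the summand $M/A_{\mathcal{C}}M$ cannot come from any additive functor $\mathcal{C}\to\mathrm{Ab}$.
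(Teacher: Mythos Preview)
Your proposal is correct and follows essentially the same approach as the paper: both construct the equivalence by sending $F$ to $\bigoplus_{X}F(X)$ and $M$ to $X\mapsto \mathrm{id}_X(M)=e_XM$, then deduce the ``in particular'' clause from the chain $\mathrm{Mod}_{\mathcal C}\simeq\mathcal C^{\vee}\simeq(\mathcal C')^{\vee}\simeq\mathrm{Mod}_{\mathcal C'}$. You spell out more carefully than the paper why non-degeneracy forces $M=\bigoplus_X e_XM$, but this is the same argument.
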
 
\begin{proof} We send any functor $\mathcal F$ from $\mathcal C$ to the category of abelian groups 
to $\bigoplus_{X\in\mathcal C}\mathcal F(X)$, 
which is a non-degenerate $A_{\mathcal C}$-module in an obvious way. 

In the opposite direction, given an $A_{\mathcal C}$-module $M$ and an object $X$, we set 
$\mathcal F(X):=\mathrm{id}_X(M)$. Any morphism $f\in\Hom_{\mathcal C}(X,X')\subseteq A_{\mathcal C}$ 
induces a map $\mathcal F(X)=\mathrm{id}_X(M)\xrightarrow{f}f\circ\mathrm{id}_X(M)=
\mathrm{id}_{X'}\circ f\circ\mathrm{id}_X(M)\subseteq\mathrm{id}_{X'}(M)=\mathcal F(X')$. 

It is easy to see that these two functors are quasi-inverse equivalences. In particular, 
we get a chain of equivalences: $\mathrm{Mod}_{\mathcal C}\simeq\mathcal C^{\vee}
\simeq(\mathcal C')^{\vee}\simeq\mathrm{Mod}_{\mathcal C'}$. \end{proof} 

The Yoneda embedding $\mathcal C\to\mathcal C^{\vee}\simeq\mathrm{Mod}_{\mathcal C}$, 
$X\mapsto h_X:=\Hom_{\mathcal C}(X,-)$ is a fully faithful functor. We are interested 
in the structure of the $A_{\mathcal C}$-module $h_X$ for the `unit' object $X$. 

\section{Algebras of finite correspondences and their modules} Fix a field $k$. 
For each pair of smooth $k$-varieties $X$ and $Y$, define $\mathrm{Cor}(X,Y)_{\mathbb Q}$ as the 
$\mathbb Q$-vector space with a basis given by the irreducible closed subsets of $X\times_kY$ whose 
associated integral subschemes are finite, flat and surjective over a connected component of $X$. 

For each triple of smooth $k$-varieties $(X,Y,Z)$, define the bilinear pairing $\mathrm{Cor}(X,Y)_{\mathbb Q}
\times\mathrm{Cor}(Y,Z)_{\mathbb Q}\xrightarrow{\circ_{X,Y,Z}}\mathrm{Cor}(X,Z)_{\mathbb Q}$ in the standard 
way: $(\alpha,\beta)\mapsto\mathrm{pr}_{XZ*}(\alpha\times Z\cap X\times\beta)$, see \cite[Ch.~1]{Fulton}. 

These pairings as compositions, turn the category of smooth $k$-varieties with morphisms 
$\mathrm{Cor}(-,-)_{\mathbb Q}$ into an {\sl additive} category, denoted $\mathrm{SmCor}_k$. 
Denote by $\mathrm{SmCor}_k^{\mathrm{proj}}$ the full subcategory of {\sl projective} $k$-varieties. 

Given a set $S$ of smooth $k$-varieties, we may consider $S$ as a full subcategory 
of $\mathrm{SmCor}_k$. As the category $S$ is preadditive, the direct sum 
$A_S:=\bigoplus_{X,Y\in S}\mathrm{Cor}(X,Y)_{\mathbb Q}$ carries a ring structure. 

\subsection{The socle of $Z_0(S)$} For each smooth variety $Y$ over $k$, let 
$Z_0(Y):=\mathrm{Cor}(\mathrm{Spec}(k),Y)_{\mathbb Q}$ be the $\mathbb Q$-vector space of 0-cycles on $Y$. 

\begin{lemma} \label{from_X_to_proj_line} Let $X$ be a smooth quasiprojective variety over $k$, 
$F$ be a characteristic zero field, and $\xi\in Z_0(Y)\otimes F$ be a non-zero 0-cycle. 
Then there exists a correspondence $\vartheta\in\mathrm{Cor}(X,\mathbb P^1_k)\otimes F$, 
such that $\vartheta\xi=[0]-[\infty]\in Z_0(\mathbb P^1_k)$. \end{lemma}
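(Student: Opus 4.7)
The plan is to reduce to a single closed point and then produce $\vartheta$ as a rescaled difference of two divisor correspondences on $X\times\mathbb{P}^1_k$, interpolated so that they agree on most of the support of $\xi$ but differ at one designated point. Writing $\xi=\sum_{i=1}^n a_i[x_i]$ with distinct closed $x_i\in X$ and $a_i\in F^\times$, after relabeling so that $a_1\ne 0$ it suffices to construct $\vartheta_1\in\mathrm{Cor}(X,\mathbb{P}^1_k)\otimes F$ satisfying $\vartheta_1[x_1]=[0]-[\infty]$ and $\vartheta_1[x_j]=0$ for $j\ge 2$; then $\vartheta:=a_1^{-1}\vartheta_1$ does the job.

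Fix a projective compactification $\bar X\supset X$ and a very ample line bundle $L$ on $\bar X$; after replacing $L$ by a high enough tensor power one may assume the evaluation map $H^0(\bar X,L)\to\bigoplus_{j=1}^n k(x_j)$ is surjective. Pick an integer $d>\dim X$ and seek $\vartheta_1$ of the form $c(D_1-D_2)$ with $c:=(d[k(x_1):k])^{-1}$, where each $D_i\subset X\times\mathbb{P}^1_k$ is the vanishing locus of a section $f_i=\sum_{j=0}^d s_j^{(i)}T_0^jT_1^{d-j}$ of $p_X^*L\otimes p_{\mathbb{P}^1}^*\mathcal{O}_{\mathbb{P}^1}(d)$, with $s_j^{(i)}\in H^0(X,L)$ pulled back from $\bar X$. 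Choose the $s_j^{(i)}$ so that: (i) $f_1|_{x_1\times\mathbb{P}^1}$ is a nonzero scalar times $T_0^d$ and $f_2|_{x_1\times\mathbb{P}^1}$ is a nonzero scalar times $T_1^d$; (ii) $s_j^{(1)}(x_k)=s_j^{(2)}(x_k)$ for every $j$ and every $k\ge 2$; (iii) each $(d+1)$-tuple $(s_0^{(i)},\ldots,s_d^{(i)})$ has no common zero on $X$. Then (i) together with pushforward from $\mathbb{P}^1_{k(x_1)}$ to $\mathbb{P}^1_k$ gives $D_1\circ[x_1]=d[k(x_1):k][0]$ and $D_2\circ[x_1]=d[k(x_1):k][\infty]$; condition (ii) forces $(D_1-D_2)\circ[x_k]=0$ for $k\ge 2$; multiplication by $c$ then yields the required $\vartheta_1$.

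The main obstacle is reconciling (i)--(iii) while verifying that each $D_i$ is a genuine finite correspondence. Condition (iii) ensures $D_i\to X$ is finite; flatness follows by miracle flatness, each $D_i$ being Cohen--Macaulay as a Cartier divisor in the smooth variety $X\times\mathbb{P}^1_k$ and equidimensional over the smooth base $X$; and every irreducible component of $D_i$ must surject onto $X$, since the class carries a nonzero $\mathcal{O}_{\mathbb{P}^1}(d)$-factor. Since $d+1>\dim X$, the common vanishing on $X$ of $d+1$ generic sections of $L$ has codimension exceeding $\dim X$ and is therefore empty, so (iii) is an open, nonempty condition on $H^0(X,L)^{d+1}$; the linear constraints (i) and (ii) cut out a nonempty affine subspace (by the surjectivity of evaluation arranged above) on which (iii) remains generic. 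The characteristic-zero hypothesis on $F$ is used to invert the positive integer $d[k(x_1):k]$ as well as the scalar $a_1$.
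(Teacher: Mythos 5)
Your approach is genuinely different from the paper's and in some respects cleaner. The paper maps $X$ into $\mathbb{P}^n$ via Kedlaya's refinement of Noether normalization, collapses all but one support point of $\xi$ to a single $k$-rational point via a finite endomorphism of $\mathbb{P}^n$, and then chains together transposed graphs of finite morphisms to reach $\mathbb{P}^1$. You instead build, directly on $X\times\mathbb{P}^1_k$, a pencil $c(D_1-D_2)$ of Cartier-divisor correspondences that kills all of $\mathrm{Supp}(\xi)$ except $x_1$, which it sends to a nonzero multiple of $[0]-[\infty]$. The computation $D_i\circ[x_j]$, the reduction to $\vartheta_1$, and the miracle-flatness argument (including the observation that each irreducible component of $D_i$, being a prime divisor in the smooth, hence locally factorial, $X\times\mathbb{P}^1_k$, is again Cartier and hence flat over $X$) are all correct. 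Your construction also has the virtue of working uniformly in $\deg\xi$, since $D_1$ and $D_2$ have the same fibre degree $d$; the paper's chain $f_*h_*({}^t\Gamma_\pi)_*\psi_*\varphi_*$ only visibly produces a multiple of $[0]-[\infty]$ when $\deg\xi=0$, which is the only case it is applied to.

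There is, however, one genuine gap: the final existence step. You deduce that the (open) condition (iii) holds ``generically'' on the nonempty affine-linear locus cut out by (i)--(ii), and implicitly conclude that a tuple $(s_j^{(i)})$ with all three properties exists. That conclusion requires a $k$-rational point of a dense open subset of an affine space over $k$. When $k$ is infinite this is automatic; when $k$ is finite it is not --- a proper closed subset of $\mathbb{A}^N_{\mathbb{F}_q}$ can contain all $\mathbb{F}_q$-points. The lemma is stated for an arbitrary field $k$, and the paper is careful on exactly this point: Kedlaya's theorem \cite[Theorem~1]{Kedlaya} is a Bertini-type statement engineered (by sieve methods à la Poonen) to produce the needed hypersurface sections over finite fields as well. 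To close the gap you would either have to invoke such a finite-field Bertini theorem to find the tuple $(s_j^{(i)})$ directly over $k$, or construct the $D_i$ over a finite extension $k'/k$ and push the resulting correspondence forward along $X_{k'}\times_{k'}\mathbb{P}^1_{k'}\to X\times_k\mathbb{P}^1_k$, tracking carefully how $\xi$ and the support points decompose after base change. As written, the argument is complete only for $k$ infinite.
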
 
\begin{proof} 
Let $\xi=\sum_{i=1}^Nm_i[p_i]$ for non-zero $m_i\in F$ and closed points $p_i\in X$. 

By a refinement of the projective version of the Noether normalization lemma 
proved in \cite[Theorem 1]{Kedlaya}, $X$ admits a morphism $\varphi:X\to\mathbb P^n_k$, 
where $n:=\dim X$, which maps $p_2,\dots,p_N$ into a hyperplane $H\subset\mathbb P^n_k$ and maps $p_1$ to 
the complement of $H$. Set $p_i':=\varphi(p_i)$ for all $i$, so $p_2',\dots,p_N'\in H$, 
$p_1'\in\mathbb P^n_k\smallsetminus H$. This means that $\varphi_*\xi=\sum_{i=1}^Nm_i[p_i']\neq 0$. 

Let us show by induction on $N\ge 2$ that there exists a finite endomorphism $\psi:\mathbb P^n_k\to\mathbb P^n_k$ 
sending the points $p_2',\dots,p_N'$ to a single $k$-rational point $p\in\mathbb P^n_k$ and sending 
the point $p_1'$ to a distinct $k$-rational point $q\in\mathbb P^n_k$, $q\neq p$. 
Let $W_0,\dots,W_n$ be homogeneous coordinates on $\mathbb P^n_k$ such that $H$ is given by the equation 
$W_0=0$, while both $p_2'$ and $p_3'$ do not lie on the hyperplane given by the equation $W_1=0$. 

For each $2\le i\le n$, set $w_i:=W_i/W_1$, and let $P_{ij}$ be the minimal polynomial of $w_i(p_j')$ over $k$. 

Set $d:=\max\limits_{2\le i\le n}\deg(P_{i2}P_{i3})$, and 
$P_i:=P_{i2}(w_i)P_{i3}(w_i)w_i^{d-\deg(P_{i2}P_{i3})}W_1^d$. Then the map 
\[g:(W_0:\ldots:W_N)\mapsto(W_0^d:W_1^d:P_1:P_2:\ldots:P_N)\] is a well-defined endomorphism of $\mathbb P^n_k$, 
$g$ preserves $H$, the point $g(p_2')=g(p_3')$ is $k$-rational, and $g$ transforms $\varphi_*\xi$ 
to $m_1[p_1'']+(m_2+m_3)[p_3'']+\sum_{i=4}^Nm_i[p_i'']$, where $p_3'',\dots,p_N''\in H$ and $p_1''\notin H$. 

Then $\psi_*\varphi_*\xi$ is a non-zero multiple of $[p]-[q]$. 

Let $\Upsilon$ be an $n$-dimensional variety admitting a non-constant morphism $h:\Upsilon\to\mathbb P^1_k$ 
(e.g., $\Upsilon=\mathbb P^{n-1}\times\mathbb P^1_k$ and $h:\Upsilon\to\mathbb P^1_k$ is the projection). 
Fix a fibre $D$ of $h$, and a hyperplane $H'\subset\mathbb P^n_k$ containing $p$ but not $q$. By the same 
\cite[Theorem 1]{Kedlaya}, there exists a finite morphism $\pi:\Upsilon\to\mathbb P^n_k$ such that $\pi(D)=H'$, 
so $D$ meets $\pi^{-1}(p)$ but not $\pi^{-1}(q)$, and therefore, $h_*\pi^*\psi_*\varphi_*\xi\neq 0$. 
Then $h_*({}^t\Gamma_{\pi})_*\psi_*\varphi_*\xi=h_*\pi^*\psi_*\varphi_*\xi$ is a non-zero divisor 
$E=\sum\limits_{i=0}^na_i[q_i]$ on $\mathbb P^1_k$ for some $a_i\neq 0$ and pairwise distinct $q_i$. 

Choose a morphism $f:\mathbb P^1_k\to\mathbb P^1_k$ such that $f(q_0)=0$, $f(q_i)=\infty$ for all 
$1\le i\le n$, so $f_*h_*({}^t\Gamma_{\pi})_*\psi_*\varphi_*\xi=a_0([0]-[\infty])$. \end{proof}

For each set $S$ of smooth varieties over $k$, consider $Z_0(S):=\bigoplus_{X\in S}Z_0(X)$. Then 
the above pairings $\circ_{\mathrm{Spec}(k),Y,Z}:Z_0(Y)\times\mathrm{Cor}(Y,Z)_{\mathbb Q}\to Z_0(Z)$, 
given by $(\alpha,\beta)\mapsto\mathrm{pr}_{Z*}(\alpha\times Z\cap\beta)$, induce an 
$A_S$-module structure on $Z_0(S)$. 

Define the {\sl degree} of a 0-cycle $\alpha=\sum_im_iP_i$ on $X$ by 
$\deg(\alpha):=\sum_im_i[\varkappa(P_i):k]$, where $\varkappa(P_i)$ is the residue field of $P_i$. 

For each smooth variety $Y$ over $k$, let $Z_0^{\circ}(Y)$ be the subspace of 0-cycles of degree 0 
on each connected component of $Y$. 

Obviously, $Z_0^{\circ}(S):=\bigoplus\limits_{X\in S}Z_0^{\circ}(X)$ is an $A_S$-submodule of $Z_0(S)$. 

Recall (\cite[\S2]{P.Samuel}, \cite[Ch.~1]{Fulton}), that a cycle is called {\sl rationally equivalent} 
to zero (or {\sl rationally trivial}) if it is a sum of divisors of rational functions on subvarieties. 

\begin{theorem} \label{the-socle} Let $S$ be a set of smooth varieties over $k$, 
and $F$ be a characteristic zero field. 
Then \begin{enumerate} \item \label{radical} any proper $(A_S\otimes F)$-submodule of 
$Z_0(S)\otimes F$ is contained in the submodule $Z_0^{\circ}(S)\otimes F$; 
\item \label{socle} if $S$ consists of projective varieties then any non-zero $(A_S\otimes F)$-submodule 
of $Z_0(S)\otimes F$ contains the $A_S$-submodule 
\[Z_0^{\mathrm{rat}}(S):=\bigoplus\limits_{X\in S}Z_0^{\mathrm{rat}}(X)\] 
of 0-cycles rationally equivalent to 0 on all $X\in S$. 
\end{enumerate} \end{theorem}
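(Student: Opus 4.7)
I would treat the two parts separately, using rather different tools.

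For \eqref{radical}, I argue by contrapositive: if $M$ is not contained in $Z_0^\circ(S) \otimes F$, I want $M = Z_0(S) \otimes F$. The idea is to use ``constant-section'' correspondences to place every $[q]$ in $M$. Pick $\xi \in M$ with $\xi \notin Z_0^\circ(S) \otimes F$, some $X_0 \in S$, and a connected component $Y_j \subseteq X_0$ with $d := \deg(\xi_{X_0}|_{Y_j}) \neq 0$. For each $Z \in S$ and each closed point $q \in Z$, I would take the cycle $[Y_j \times_k \{q\}] \in \mathrm{Cor}(X_0, Z)$ (decomposing $Y_j \times_k \kappa(q)$ into its irreducible components if needed; each of them is finite flat surjective over $Y_j$). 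A direct intersection-theoretic computation yields $[Y_j \times_k \{q\}] \cdot \xi = d \cdot [q]$, so $[q] \in M$ since $F$ has characteristic zero. As $q$ and $Z$ are arbitrary, $M = Z_0(S) \otimes F$.

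For \eqref{socle}, I combine Lemma \ref{from_X_to_proj_line} with the classical description of $Z_0^{\mathrm{rat}}(X)$, for $X$ projective, as the $\mathbb{Q}$-span of cycles $f_* g^*([0] - [\infty]) = f_*\mathrm{div}(g)$, where $f \colon C \to X$ is a morphism from a smooth projective curve and $g$ is a nonconstant rational function on $C$. Given $0 \neq \xi \in M$, I restrict to a nonzero component $\xi_{X_0}$ and apply Lemma \ref{from_X_to_proj_line} to produce $\vartheta \in \mathrm{Cor}(X_0, \mathbb P^1_k) \otimes F$ with $\vartheta \cdot \xi_{X_0} = [0] - [\infty]$. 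Although $\mathbb P^1_k$ need not belong to $S$, for every triple $(C, f, g)$ the composition $[\Gamma_f] \circ {}^t[\Gamma_g] \circ \vartheta$ lies in $\mathrm{Cor}(X_0, X) \subseteq A_S$, and the standard identifications of $[\Gamma_f]$ with $f_*$ and of ${}^t[\Gamma_g]$ with $g^*$ show that this composition sends $\xi$ to $f_*\mathrm{div}(g)$. Spanning over all such triples gives $Z_0^{\mathrm{rat}}(X) \subseteq M$ for every $X \in S$.

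The main obstacle is bookkeeping rather than conceptual: one must verify that the constant-section cycles $[Y_j \times_k \{q\}]$ and the graph/transposed-graph cycles $[\Gamma_f]$, ${}^t[\Gamma_g]$ really lie in the relevant correspondence groups (i.e., each of their components satisfies the finite-flat-surjective condition of the definition of $\mathrm{Cor}$) and that their actions on 0-cycles are computed by the pullback/pushforward formulas claimed. These reduce to routine intersection-theoretic calculations as in \cite[Ch.~1]{Fulton}, simplified in characteristic zero by the properness of all the intersections involved and the étaleness of the relevant residue-field tensor products.
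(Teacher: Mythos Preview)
Your proof is correct and follows essentially the same strategy as the paper's: constant-section correspondences $[Y_j\times_k\{q\}]$ (the paper's $[X\times_k y]$) for part~\eqref{radical}, and Lemma~\ref{from_X_to_proj_line} followed by finite correspondences out of $\mathbb P^1_k$ for part~\eqref{socle}. The only cosmetic differences are that the paper first reduces to connected varieties and, in part~\eqref{socle}, invokes part~\eqref{radical} to assume $\deg(\xi_{X_0})=0$ before applying the lemma, whereas you apply the lemma directly and spell out the correspondences from $\mathbb P^1_k$ explicitly as $[\Gamma_f]\circ{}^t[\Gamma_g]$ (a special case of the paper's unspecified $\gamma$).
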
 
\begin{proof} It is clear that if $S'$ is the set of connected components of 
varieties in $S$ then $A_{S'}$ and $A_S$ are naturally isomorphic, while $Z_0(S')$ and $Z_0(S)$ coincide 
as $A_S$-modules. This means that we may assume that all varieties in $S$ are connected. Given any 
characteristic zero field $F$ and any non-zero element $\xi=(\xi_X)_{X\in S}\in Z_0(S)\otimes F$, 
there is $X\in S$ such that $\xi_X\neq 0$, so $\xi':=\mathrm{id}_X\xi\neq 0$. 
\begin{enumerate} \item For any $Y\in S$ and any closed point $y\in Y$, the finite correspondence 
$[X\times_ky]\in\mathrm{Cor}(X,Y)_{\mathbb Q}$ maps $\xi_X$ to the 0-cycle 
$\deg(\xi_X)\cdot[y]\in Z_0(Y)\otimes F$, so if $\deg(\xi_X)\neq 0$ then $\xi'$ 
(and therefore, $\xi$) generates the whole $(A_S\otimes F)$-module $Z_0(S)\otimes F$, 
which is equivalent to (\ref{radical}). 

\item According to (\ref{radical}), we may further assume that $\deg(\xi_X)=0$ and, as $\xi_X\neq 0$, 
that $\dim X>0$. 

By Lemma~\ref{from_X_to_proj_line}, there exists a correspondence 
$\vartheta\in\mathrm{Cor}(X,\mathbb P^1_k)\otimes F$, such that $\vartheta\xi_X=[0]-[\infty]\in Z_0(\mathbb P^1_k)$.

Finally, for each $Y\in S$, any 0-cycle on $Y$ rationally equivalent to 0 is a linear combination of images 
of the cycle $[0]-[\infty]$ under finite correspondences $\gamma$ from $\mathbb P^1_k$ to $Y$, i.e. of elements 
$(\gamma\circ\vartheta)_*\xi_X$ for appropriate $\gamma$'s. \end{enumerate} \end{proof} 

\begin{remark} A module $M$ over a $\mathbb Q$-algebra $A$ is called {\sl absolutely simple} if 
$M\otimes F$ is a simple $(A_S\otimes F)$-module for any characteristic zero field $F$. 
Equivalently, the $A$-module $M$ is simple and $\mathrm{End}_A(M)=\mathbb Q$. In particular, 
in the setting of Theorem~\ref{the-socle}, the $A_S$-modules $Z_0(S)/Z_0^{\circ}(S)$ and 
$Z_0^{\mathrm{rat}}(S)$ are absolutely simple, whenever $S$ is non-empty. \end{remark} 

\subsection{Motivic $A_S$-modules} By definition (\cite{P.Samuel}), an equivalence relation 
$\sim$ is {\sl adequate} if it satisfies the following conditions: 
\begin{itemize} \item it is compatible with the addition of cycles, i.e. a subgroup $Z^{\sim}(X)$ of 
cycles on each variety $X$ is fixed, and two cycles on $X$ are equivalent if and only if their 
difference belongs to $Z^{\sim}(X)$; 
\item for any variety $X$, any cycle $\alpha$ on $X$, and any subvariety $W$ on $X$, there exists 
a cycle $\alpha'\sim\alpha$ intersecting $W$ properly; 
\item for any pair of smooth projective varieties $X$ and $Y$, a cycle $\beta\sim 0$ on $X$, and 
a cycle $\alpha$ on $X\times Y$ intersecting $\beta\times Y$ properly, the cycle 
$\alpha(\beta):=\mathrm{pr}_Y((\beta\times Y)\cdot\alpha)$ is $\sim$-equivalent to 0 on $Y$. \end{itemize} 
\begin{example}[\cite{P.Samuel}, \S2] Besides the {\sl rational equivalence} mentioned above, the following  
equivalence relations are adequate. \begin{itemize} 
\item A cycle $\alpha$ on a smooth projective variety $X$ is called {\sl algebraically equivalent} 
to zero if there exist a curve $C$, points $c,d\in C$ and a cycle $\beta$ on $X\times C$, which is 
flat over $C$ such that $\alpha=[\beta\cap(X\times\{c\})]-[\beta\cap(X\times\{d\})]$. 
\item A cycle on a smooth projective variety is called {\sl homologically equivalent} to zero 
(with respect to a fixed Weil cohomology theory) if it is annihilated by the cycle map. 
\item A cycle $\alpha$ on a smooth projective variety is called {\sl numerically equivalent} to zero if 
$\deg(\alpha\cap W)=0$ for any subvariety $W$ of the complementary dimension that meet $\alpha$ properly. 
\end{itemize} \end{example} 

Recall (see, e.g., \cite{Yu.I.Manin}), that a (homological) effective 
{\sl Grothendieck motive} over $k$ modulo an `adequate' equivalence relation $\sim$ is defined as 
a pair $(X,\pi)$ consisting of a smooth projective variety $X$ over $k$ and a projector $\pi$ in 
the algebra of self-correspondences on $X$ of dimension $\dim X$ with coefficients in $\mathbb Q$ 
modulo $\sim$. The morphisms between pairs $(X,\pi)$ and $(X',\pi')$ are algebraic cycles $\alpha$ 
on $X\times_kX'$ of dimension $\dim X$ modulo $\sim$, and such that $\alpha=\pi'\circ\alpha\circ\pi$. 

The motives over $k$ modulo an equivalence relation $\sim$ form a pseudo-abelian category, denoted 
by $\mathcal{M}_{k,\mathrm{eff}}^{\sim}$. The category $\mathcal{M}_{k,\mathrm{eff}}^{\sim}$ 
carries a tensor structure: $(X,\pi)\otimes(X',\pi'):=(X\times_kX',\pi\times\pi')$. 

Denote by $\mathbb M^{\sim}:\mathrm{SmCor}_k^{\mathrm{proj}}\to\mathcal{M}_{k,\mathrm{eff}}^{\sim}$ the 
additive functor $X\mapsto(X,\Delta_X)$, where $\Delta_X$ is the class of the diagonal in $X\times_kX$. 
In particular, \[\mathbb M^{\sim}(\mathbb P^1_k)\cong\mathbb M^{\sim}(\mathrm{Spec}(k))\oplus\mathbb L,\quad
\mbox{where $\mathbb L=(\mathbb P^1_k,[\{q\}\times\mathbb P^1_k])$}\] 
for any rational point $q\in\mathbb P^1(k)$. It is easy to see that the natural map 
\[\mathrm{Hom}_{\mathcal{M}_{k,\mathrm{eff}}^{\sim}}(U,V)\to
\mathrm{Hom}_{\mathcal{M}_{k,\mathrm{eff}}^{\sim}}(U\otimes\mathbb L,V\otimes\mathbb L)\] 
is bijective for all effective motives $U$ and $V$. 

Denote by $\mathcal{M}_k^{\sim}$ the category of triples $(X,\pi,n)$, 
where $(X,\pi)$ are as above and $n$ is an integer, while 
$\mathrm{Hom}_{\mathcal{M}_k^{\sim}}((X,\pi,n),(X',\pi',n')):=\mathrm{Hom}_{\mathcal{M}_{k,\mathrm{eff}}^{\sim}}
((X,\pi)\otimes\mathbb L^{\otimes(m+n-n')},(X',\pi')\otimes\mathbb L^{\otimes m})$ 
for any integer $m>|n'-n|$. We consider $\mathcal{M}_{k,\mathrm{eff}}^{\sim}$ 
as a full subcategory of $\mathcal{M}_k^{\sim}$ under the embedding $(X,\pi)\mapsto(X,\pi,0)$. 

For each variety $Y$ and an integer $q$, denote by $CH_q(Y)$ the group of 
dimension $q$ cycles on $Y$ modulo the rational equivalence. 
\begin{theorem} The functor $\mathbb M^{\sim}$ is full. In other words, the natural ring homomorphism 
$A_S\to\bigoplus_{X,Y\in S}CH_{\dim X}(X\times_kY)_{\mathbb Q}$ is surjective for any set $S$ of 
smooth projective varieties over $k$. \end{theorem}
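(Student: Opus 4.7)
The goal is to show that for $X, Y$ smooth projective over $k$ with $X$ connected of dimension $n$, every class in $CH_n(X\times_k Y)_{\mathbb Q}$ is a $\mathbb Q$-linear combination of classes of irreducible closed subvarieties of $X\times_k Y$ finite, flat, and surjective over $X$. The disconnected case reduces component-wise as in the proof of Theorem~\ref{the-socle}, so one may assume $X$ connected and treat a single irreducible $n$-dimensional $V\subset X\times_k Y$, showing that $[V]$ lies in the image of $\mathrm{Cor}(X,Y)_{\mathbb Q}$.

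The plan is a moving-lemma argument on the smooth projective variety $X\times_k Y$. One distinguishes two cases according to whether $\mathrm{pr}_X|_V\colon V\to X$ is dominant. In the dominant case, $V\to X$ is proper and generically finite, but may fail to be equidimensional over a proper closed $B\subsetneq X$ where the fibre dimension jumps; one applies a moving lemma to replace $[V]$ by a cycle equidimensional of relative dimension zero over $X$, which by properness plus quasi-finiteness is then finite and surjective over $X$, and by miracle flatness (at the Cohen--Macaulay points) becomes flat, the remaining non-CM locus being handled by a further perturbation or by an alteration. In the non-dominant case, $V\subset X'\times_k Y$ for some closed $X'\subsetneq X$; one then seeks a rational equivalence moving $[V]$ to a combination of dominant cycle classes, again via the moving lemma applied in the ambient $X\times_k Y$ to realize proper intersection with sufficiently many fibres $\{x\}\times_k Y$.

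The central obstacle is to implement the moving compatibly with the projection to $X$, so as to produce cycles that are equidimensional (and ultimately flat) over $X$ rather than merely transverse to an arbitrary auxiliary cycle. A classical mechanism is to embed $X\times_k Y\hookrightarrow\mathbb P^N$ via an ample line bundle and translate $V$ by generic elements of $\mathrm{PGL}_{N+1}$, with finitely many iterations shrinking to the empty set the loci where dominance or equidimensionality fails; this is in the spirit of the Chow--Roberts moving lemma. A more conceptual alternative is to invoke the Suslin--Voevodsky identification of equidimensional relative cycles with Chow correspondences on products of smooth projective varieties, which would immediately yield the desired surjectivity.
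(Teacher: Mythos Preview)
The paper's own proof is a single citation: the surjectivity is a particular case of \cite[Theorem~7.1]{FriedlanderVoevodsky}. Your final sentence --- invoking the Suslin--Voevodsky identification of equidimensional relative cycles with Chow groups on products of smooth projective varieties --- is precisely that citation, so you do eventually land on the paper's approach.

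However, the primary argument you sketch before that fallback has a genuine gap, which you yourself flag as ``the central obstacle'' without resolving it. The classical Chow--Roberts moving lemma (via generic $\mathrm{PGL}_{N+1}$-translates in an ambient $\mathbb P^N$) moves a cycle so that it meets a \emph{fixed} finite collection of subvarieties properly; it does not, and cannot directly, arrange that the moved cycle becomes equidimensional of relative dimension~$0$ over the base $X$, because that is a condition over \emph{every} fibre simultaneously, i.e.\ over an infinite family of subvarieties $\{x\}\times_k Y$. Iterating finitely many generic translates does not close this gap. Producing equidimensional representatives over a smooth base is exactly the content of the Friedlander--Voevodsky moving lemma for bivariant cycle cohomology, and its proof is genuinely more involved (it uses the machinery of relative cycles and the cdh-topology). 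Your handling of flatness is also incomplete: miracle flatness requires the total space to be Cohen--Macaulay, and an arbitrary integral $n$-dimensional subvariety of $X\times_k Y$ need not be; ``a further perturbation or an alteration'' does not specify how to stay within the rational equivalence class while achieving this.

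In short: your direct moving-lemma plan does not go through as written, and the result you would need to make it work is essentially the theorem being cited. The honest proof here is the citation.
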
 
\begin{proof} This is a particular case of \cite[Theorem 7.1]{FriedlanderVoevodsky}. \end{proof} 

For any set $S$ of smooth projective vaieties over $k$, each Grothendieck motive 
$N\in\mathcal{M}_k^{\sim}$ gives rise to an $A_S$-module 
$\mathfrak{M}^{\sim}_N(S):=\bigoplus_{X\in S}\Hom_{\mathcal{M}_k^{\sim}}(N,\mathbb M^{\sim}(X))$. 
 
We omit the symbol $\sim$ from the notation when $\sim=\sim_{\mathrm{num}}$ is the {\sl numerical} equivalence. 

\begin{corollary} For any motive $N\in\mathcal{M}_k$, the $A_S$-module $\mathfrak{M}_N(S)$ is semisimple. 
\end{corollary}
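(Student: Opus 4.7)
The plan is to reduce to the case of a simple numerical motive $N$, and then to show that $\mathfrak{M}_N(S)$ is either zero or a simple $A_S$-module; semisimplicity for general $N$ then follows by summing over simple constituents.

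First, I would invoke Jannsen's theorem, which asserts that $\mathcal{M}_k=\mathcal{M}_k^{\sim_{\mathrm{num}}}$ is a semisimple $\mathbb{Q}$-linear abelian category with finite-dimensional Hom spaces. Consequently $\mathrm{End}_{\mathcal{M}_k}(N)$ is a finite-dimensional semisimple algebra, and $N$ decomposes as a finite direct sum $N=\bigoplus_i N_i$ of simple motives. Additivity of $\mathrm{Hom}_{\mathcal{M}_k}(-,\mathbb{M}(X))$ in the first slot yields $\mathfrak{M}_N(S)=\bigoplus_i\mathfrak{M}_{N_i}(S)$, reducing the problem to the simple case.

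Assume $N$ is simple and let $\xi=(\xi_X)_{X\in S}\in\mathfrak{M}_N(S)$ be any non-zero element; I would show that the cyclic submodule $A_S\cdot\xi$ equals all of $\mathfrak{M}_N(S)$. Choose $X\in S$ with $\xi_X\neq 0$; since the idempotent $\mathrm{id}_X\in A_S$ projects $\xi$ onto its $X$-component, $\xi_X\in A_S\cdot\xi$. Simplicity of $N$ forces $\xi_X:N\to\mathbb{M}(X)$ to be a monomorphism, and semisimplicity of $\mathcal{M}_k$ then provides a retraction $r:\mathbb{M}(X)\to N$ with $r\circ\xi_X=\mathrm{id}_N$. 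For any $Y\in S$ and any $\eta\in\mathrm{Hom}_{\mathcal{M}_k}(N,\mathbb{M}(Y))$, the morphism $\beta:=\eta\circ r:\mathbb{M}(X)\to\mathbb{M}(Y)$ satisfies $\beta\circ\xi_X=\eta$. By the fullness of $\mathbb{M}$ established in the preceding theorem, $\beta$ is induced by some finite correspondence $\alpha\in\mathrm{Cor}(X,Y)_{\mathbb{Q}}$, whence $\alpha\cdot\xi_X=\eta\in A_S\cdot\xi$. As $Y$ and $\eta$ are arbitrary, $A_S\cdot\xi=\mathfrak{M}_N(S)$, proving simplicity.

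The two substantive inputs are deep external results already cited in this section: Jannsen's semisimplicity theorem (both for the decomposition of $N$ and for splitting the monomorphism $\xi_X$) and the fullness of $\mathbb{M}$ (to promote the abstract morphism $\beta$ to an actual finite correspondence acting on $\mathfrak{M}_N(S)$). With these granted, the argument is essentially formal, and I foresee no serious obstacle; the only point requiring care is the bookkeeping of the $A_S$-action, specifically the role of the idempotents $\mathrm{id}_X$ in isolating a single variety and allowing the transfer from the chosen component to an arbitrary $Y\in S$.
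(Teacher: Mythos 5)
Your proof is correct, but it takes a genuinely different route from the paper's. The paper's argument is top-down: using the fullness of $\mathbb{M}$, it identifies the algebra through which the action factors as $A_S/\!\sim_{\mathrm{num}}\ \cong\bigoplus_{X,Y\in S}\Hom_{\mathcal{M}_k}(\mathbb{M}(X),\mathbb{M}(Y))$, i.e.\ the category algebra of a full additive subcategory of the semisimple abelian category $\mathcal{M}_k$, and then asserts that every non-degenerate module over such an algebra is semisimple, so in particular $\mathfrak{M}_N(S)$ is. Your argument is bottom-up and specific to the modules $\mathfrak{M}_N(S)$: you decompose $N$ into simples (Jannsen), reduce to $N$ simple, and show that any nonzero $\xi$ generates, by splitting the monomorphism $\xi_X\colon N\hookrightarrow\mathbb{M}(X)$ (again Jannsen) and then lifting $\eta\circ r\colon\mathbb{M}(X)\to\mathbb{M}(Y)$ to a genuine finite correspondence via fullness. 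Both proofs ultimately rest on the same two deep inputs (Jannsen's semisimplicity theorem and the fullness of $\mathbb{M}$), but yours is more self-contained — the paper's blanket claim about all non-degenerate $(A_S/\!\sim_{\mathrm{num}})$-modules is left unjustified there, whereas you avoid it — and it gives extra information: the simple constituents of $\mathfrak{M}_N(S)$ are exactly the nonzero $\mathfrak{M}_P(S)$ for simple summands $P$ of $N$. The paper's route, in exchange, is shorter and proves semisimplicity for a wider class of modules than just the $\mathfrak{M}_N(S)$.
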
 
\begin{proof} The $A_S$-action on $\mathfrak{M}_N(S)$ factors through an action of the algebra 
$A_S/\sim_{\mathrm{num}}$, while 
$A_S/\sim_{\mathrm{rat}}\cong\bigoplus_{X,Y\in S}CH_{\dim X}(X\times_kY)_{\mathbb Q}$, so 
$A_S/\sim_{\mathrm{num}}\cong\bigoplus_{X,Y\in S}CH_{\dim X}(X\times_kY)_{\mathbb Q}/\sim_{\mathrm{num}}$. 

By \cite{Jannsen-semisimplicity}, $\mathcal{M}_k$ is an abelian semisimple category, and therefore, any 
non-degenerate\\ $(A_S/\sim_{\mathrm{num}})$-module is semisimple. In particular, so is the 
$A_S$-module $\mathfrak{M}_N(S)$. \end{proof} 

\section{Loewy filtrations on $Z_0(S)$} Modifying slightly the standard definition (see, e.g. \cite{Irving}), 
a filtration of a module $M$ is called a {\sl Loewy filtration} if it is finite, its successive 
quotients are semisimple and its length is minimal under these assumptions. 

Let $S$ be a set of smooth irreducible projective varieties over a field $k$. 
We are interested in Loewy filtrations on the $A_S$-module $Z_0(S)$. 

By Theorem~\ref{the-socle}, the socle (i.e. the maximal semisimple submodule) of the $A_S$-module $Z_0(S)$ 
is $Z_0^{\mathrm{rat}}(S)$, while the radical (i.e. the intersection of all maximal submodules) of the 
$A_S$-module $Z_0(S)$ is $Z_0^{\circ}(S)$, and $Z_0^{\mathrm{rat}}(S)$ is an essential submodule of $Z_0(S)$. 

The $A_S$-action on the quotient $CH_0(S)_{\mathbb Q}:=Z_0(S)/Z_0^{\mathrm{rat}}(S)$ factors through 
an action of the quotient $A_S/\sim_{\mathrm{rat}}$ of $A_S$ by the rational equivalence. 

\subsection{The case of curves} 
\begin{proposition} Let $S$ be a set of smooth projective curves over $k$. 

Then $Z_0^{\mathrm{rat}}(S)\subset Z_0^{\circ}(S)\subset Z_0(S)$ is 
the unique Loewy filtration on the $A_S$-module $Z_0(S)$. \end{proposition}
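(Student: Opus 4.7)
The plan is to build on Theorem~\ref{the-socle}, which identifies the socle of the $A_S$-module $Z_0(S)$ as the absolutely simple $Z_0^{\mathrm{rat}}(S)$ and its radical as $Z_0^{\circ}(S)$, with the top quotient $Z_0(S)/Z_0^{\circ}(S)$ also absolutely simple. What remains is (i) to show that the middle slice $Z_0^{\circ}(S)/Z_0^{\mathrm{rat}}(S)$ is also semisimple as an $A_S$-module, and (ii) to deduce both the minimality of length and the uniqueness of the filtration.

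For (i), I would use the Abel--Jacobi identification $Z_0^{\circ}(X)/Z_0^{\mathrm{rat}}(X)\cong J(X)(k)\otimes\mathbb{Q}$ for each smooth projective curve $X\in S$, where $J(X)$ is the Jacobian. Modulo rational equivalence, a finite correspondence $\gamma\in\mathrm{Cor}(X,Y)_{\mathbb{Q}}$ is a class in $\mathrm{Pic}(X\times_k Y)_{\mathbb{Q}}$, decomposing into a pullback from $X$, a pullback from $Y$, and a ``divisorial correspondence'' piece corresponding, via classical Jacobian theory, to a homomorphism $J(X)\to J(Y)$ of abelian varieties. The first two pieces act as zero on degree-zero cycle classes, so the $A_S$-action on $\bigoplus_{X\in S}J(X)(k)_{\mathbb{Q}}$ factors through the full $\mathbb{Q}$-linear subcategory $\mathcal{J}$ of abelian varieties up to isogeny generated by the Jacobians $\{J(X)\}_{X\in S}$. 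By Poincar\'e's complete reducibility theorem, $\mathcal{J}$ is semisimple with division $\mathbb{Q}$-algebras as endomorphism rings of its simple objects. Invoking the Morita equivalence from the first section, a non-degenerate $\mathcal{J}$-module is the same as an additive functor $\mathcal{J}\to\mathrm{Ab}$, which on each simple object yields a free (hence semisimple) module over the corresponding division algebra; such a functor therefore decomposes as a direct sum of Yoneda simples $h_A$ indexed by isogeny classes of simple abelian varieties appearing in $\mathcal{J}$. This gives the semisimplicity of $Z_0^{\circ}(S)/Z_0^{\mathrm{rat}}(S)$.

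For (ii), given any Loewy filtration $0=M_0\subsetneq M_1\subsetneq\dots\subsetneq M_n=Z_0(S)$, the non-zero semisimple submodule $M_1$ is contained in the simple socle $Z_0^{\mathrm{rat}}(S)$, so $M_1=Z_0^{\mathrm{rat}}(S)$. Dually, the non-zero semisimple quotient $Z_0(S)/M_{n-1}$ forces $M_{n-1}\supseteq Z_0^{\circ}(S)$, and equality follows from simplicity of $Z_0(S)/Z_0^{\circ}(S)$. Since $M_{n-1}/M_1=Z_0^{\circ}(S)/Z_0^{\mathrm{rat}}(S)$ is already semisimple by (i), no strictly intermediate terms are required, whence $n=3$ and $M_2=Z_0^{\circ}(S)$ as claimed (the filtration collapses to length~2 in the degenerate case $Z_0^{\mathrm{rat}}(S)=Z_0^{\circ}(S)$, for instance when $S$ consists only of rational curves). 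The main obstacle is step (i): executing cleanly the identification of correspondences with Jacobian homomorphisms, verifying that the pullback contributions act trivially on degree-zero classes modulo rational equivalence, and transferring the semisimplicity of $\mathcal{J}$ to that of the $A_S$-module through Morita equivalence.
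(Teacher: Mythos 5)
Your argument for uniqueness and minimal length (step (ii)) matches the paper's implicit reasoning: the socle is the absolutely simple $Z_0^{\mathrm{rat}}(S)$ and the unique maximal submodule is $Z_0^{\circ}(S)$, so any Loewy filtration is pinned down at the bottom and at the top, and once the middle slice is semisimple nothing further is needed. The substance of the proposition is step (i), the semisimplicity of $Z_0^{\circ}(S)/Z_0^{\mathrm{rat}}(S)$, and here you take a genuinely different route. The paper works entirely on the algebra side: writing $A_S/\!\sim_{\mathrm{rat}}=\bigoplus_{X,Y}\mathrm{Pic}(X\times_kY)_{\mathbb Q}$, it observes that $I=\bigoplus_{X,Y}\mathrm{Pic}^{\circ}(X\times_kY)_{\mathbb Q}$ is a square-zero ideal whose quotient $\bigoplus_{X,Y}\mathrm{NS}(X\times_kY)_{\mathbb Q}$ is a semisimple algebra, so that for $M=CH_0(S)_{\mathbb Q}$ both $IM$ and $M/IM$ are semisimple, and one checks $IM=Z_0^{\circ}(S)/Z_0^{\mathrm{rat}}(S)$. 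You instead work on the module side, identifying $Z_0^{\circ}(X)/Z_0^{\mathrm{rat}}(X)$ with $J(X)(k)\otimes\mathbb Q$ via Abel--Jacobi, checking that the two pullback summands of $\mathrm{Pic}(X\times_kY)_{\mathbb Q}$ kill degree-zero classes so that the action factors through $\mathrm{Hom}(J(X),J(Y))_{\mathbb Q}$, and then invoking Poincar\'e complete reducibility together with the Morita equivalence of Section~1. Your approach is more explicit geometrically and replaces the appeal to semisimplicity of the N\'eron--Severi algebra (which for divisors is numerical equivalence, i.e.\ Jannsen's theorem) by the more classical Poincar\'e reducibility; the paper's square-zero-ideal trick is slicker at the algebra level and dispenses with the case analysis of the K\"unneth components of $\mathrm{Pic}(X\times_kY)$. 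Both are correct; one small imprecision on your side is the phrase that the functor \emph{decomposes as a direct sum of Yoneda simples $h_A$} --- what you actually need, and what is true, is that any non-degenerate module over the idempotented algebra $A_{\mathcal J}$ of a semisimple $\mathbb Q$-linear category is semisimple (each $h_A$ for $A$ simple is indeed simple, and every non-degenerate module is a sum of such), so the conclusion stands, but the decomposition statement deserves a sentence of justification rather than being read off from the behaviour of the functor on each simple object separately.
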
 
\begin{proof} By Theorem~\ref{the-socle}, the socle of the $A_S$-module $Z_0(S)$ is simple and coincides 
with $Z_0^{\mathrm{rat}}(S)$, while $Z_0^{\circ}(S)$ is the unique maximal submodule of the $A_S$-module 
$Z_0(S)$. There remains only to check the semisimplicity of $Z_0^{\circ}(S)/Z_0^{\mathrm{rat}}(S)$. 

One has $A_S/\sim_{\mathrm{rat}}=\bigoplus_{X,Y\in S}\mathrm{Pic}(X\times_kY)_{\mathbb Q}$. Then the subgroup 
\[I:=\bigoplus_{X,Y\in S}\mathrm{Pic}^{\circ}(X\times_kY)_{\mathbb Q}\] is an ideal in $A_S/\sim_{\mathrm{rat}}$ 
with $I^2=0$, while $(A_S/\sim_{\mathrm{rat}})/I=\bigoplus_{X,Y\in S}\mathrm{NS}(X\times_kY)_{\mathbb Q}$ is a 
semisimple algebra. Here $\mathrm{Pic}$ is the Picard group, $\mathrm{Pic}^{\circ}$ is the subgroup 
of algebraically trivial elements, $\mathrm{NS}:=\mathrm{Pic}/\mathrm{Pic}^{\circ}$ is the N\'eron--Severi group. 

Then, for any $(A_S/\sim_{\mathrm{rat}})$-module $M$, the submodule $IM$ and the quotient $M/IM$ 
can be considered as $(A_S/\sim_{\mathrm{rat}})/I$-modules, and thus, they are semisimple. 
Applying this to the module $M=Z_0(S)/Z_0^{\mathrm{rat}}(S)$, we see that the $A_S$-module 
$IM=Z_0^{\circ}(S)/Z_0^{\mathrm{rat}}(S)$ is semisimple. \end{proof} 

\subsection{Consequences of the filtration conjecture} According to the Bloch--Beilinson motivic filtration 
conjecture (e.g., \cite[Conjecture 2.3]{Jannsen}, \cite[Conjecture 33]{M.Levine}), there should exist 
a neutral tannakian $\mathbb Q$-linear category $\mathcal{MM}_k$ (of {\sl mixed motives} over $k$) 
containing the category $\mathcal{M}_k$ as the full subcategory of the semisimple objects, covariant 
functors $H_i(-,\mathbb Q(j))$ (homology; $i,j\in\mathbb Z$) from the category of varieties over $k$ 
to $\mathcal{MM}_k$, and a functorial descending filtration $\mathcal F^{\bullet}$ on the Chow groups 
$CH_q(X)_{\mathbb Q}$ for smooth projective $k$-varieties $X$ such that 
$\mathcal F^0CH_q(X)_{\mathbb Q}=CH_q(X)_{\mathbb Q}$ and 
\[gr^i_{\mathcal F}CH^q(X)_{\mathbb Q}=\mathrm{Ext}^i_{\mathcal{MM}_k}(\mathbb Q(0),H_{2q+i}(X,\mathbb Q(-q))).\] 

As a part of the filtration conjecture, it is natural to assume the Grothendieck's `semisimplicity 
conjecture' on the coincidence of homological $\otimes\mathbb Q$ and numerical equivalences, 
so that the motive $H_{2q+i}(X,\mathbb Q(-q))$ is semisimple 
by U.~Jannsen's  theorem, \cite{Jannsen-semisimplicity}. 

A simple effective motive $P\in\mathcal{M}_k$ is called {\sl primitive of weight} 
$-i\le 0$ if, (i) $P\cong(X,\pi)$ for some $X,\pi$ with $\dim X=i$, and 
(ii) $\mathrm{Hom}_{\mathcal{M}_k}(P,\mathbb M(Y\times\mathbb P^1))=0$ 
for any smooth projective variety $Y$ of dimension $<i$. 

In particular, when $q=0$ the Beilinson 
formula becomes \begin{multline*}gr^i_{\mathcal F}CH_0(X)_{\mathbb Q}
=\mathrm{Ext}^i_{\mathcal{MM}_k}(\mathbb Q(0),H_i(X,\mathbb Q))\\ 
=\bigoplus_P\mathrm{Ext}^i_{\mathcal{MM}_k}(\mathbb Q(0),P)\otimes_{\mathrm{End}_{\mathcal{M}_k}(P)}
\mathrm{Hom}_{\mathcal{MM}_k}(P,H_i(X,\mathbb Q)),\end{multline*} 
where $P$ runs over the isomorphism classes of simple primitive motives of weight $-i$, and 
we see that the spaces $\mathcal F^iCH_0(X)_{\mathbb Q}$ should be covariant functorial. 

For each set $S$ of smooth irreducible projective varieties over a field $k$, and each integer $i\ge 0$, 
consider $\mathcal F^iCH_0(S)_{\mathbb Q}:=\bigoplus_{X\in S}\mathcal F^iCH_0(X)_{\mathbb Q}$. 
By the functoriality of $\mathcal F^{\bullet}$, this is an $A_S$-submodule of $CH_0(S)_{\mathbb Q}$. 

The algebra $A_S$ acts on $gr^i_{\mathcal F}CH_0(S)_{\mathbb Q}$ via its action on the motives 
$H_i(X,\mathbb Q)$, so the $A_S$-action on $gr^i_{\mathcal F}CH_0(S)_{\mathbb Q}$ factors 
through an action of the quotient $A_S/\sim_{\mathrm{num}}$ of $A_S$, i.e. of the algebra 
$B_S:=\bigoplus_{X,Y\in S}CH_{\dim X}(X\times_kY)_{\mathbb Q}/\sim_{\mathrm{num}}$. As the algebra 
$B_S$ is semisimple, the $A_S$-module $gr^i_{\mathcal F}CH_0(S)_{\mathbb Q}$ is semisimple as well. 

In particular, if dimensions of the varieties in $S$ do not exceed $d$ then the length $\ell(S)$ of any 
Loewy filtration of $CH_0(S)_{\mathbb Q}$ does not exceed $d+1$. (More precisely, $\ell(S)-1$ does not exceed 
the number of those $0\le i\le d$ for which $H_i(X,\mathbb Q)$ is not a Tate twist of an effective motive 
of weight $>-i$ for at least one $X\in S$.) 

It seems that the {\sl radical filtration} on $CH_0(S)_{\mathbb Q}$ (i.e. the strictly descending 
sequence of the iterated radicals) is the motivic one, but with the repeating terms omitted.  

\begin{remark} Usually (e.g., \cite{corresp_at_gen_pt} or 
\cite[Conjecture 2.3]{Jannsen}, \cite[\S5.3]{M.Levine}) one states the motivic conjectures in 
the contravariant setting, i.e. instead of $\mathcal{M}_k$ one considers its dual category (which 
is in fact equivalent to $\mathcal{M}_k$), while the homology functors from the category 
of varieties over $k$ to $\mathcal{MM}_k$ are replaced by contravariant functors $H^i(-,\mathbb Q(j))$. 
Then the homological object \[H_i(X,\mathbb Q):=H^{2\dim X-i}(X,\mathbb Q(\dim X))\]
is the Poincar\'e dual of the cohomological object $H^i(X,\mathbb Q)$, 
while the Beilinson formula for {\sl codimension} $q$ Chow groups of smooth projective $k$-varieties 
$X$ can be rewritten as \[gr^i_{\mathcal F}CH^q(X)_{\mathbb Q}
=\mathrm{Ext}^i_{\mathcal{MM}_k}(\mathbb Q(0),H^{2q-i}(X,\mathbb Q(q))).\] \end{remark}

\section{Correspondences on non-proper varieties?} 
One could try to extend Theorem~\ref{the-socle}.\ref{socle} to collections $S$ of smooth varieties over $k$ 
that are not necessarily proper. However, as there are no non-constant morphisms from projective varieties to 
affine ones, it seems that the structure of the $A_S$-module $Z_0(S)$ may be quiet complicated. 

On the other hand, if the set $S$ is considered as a preadditive category then the $A_S$-modules 
become {\sl precosheaves with transfers} (in analogy with the terminology of V.~Voevodsky). 
To restrict the category of precosheaves one can pass to the category of cosheaves in 
such a non-trivial Grothendieck topology where $Y\mapsto Z_0(Y)$ is a cosheaf. 

In \cite{Voevodsky_Homology-I}, a Grothendieck topology on the categories of schemes of finite type 
over noetherian bases, called the $h$-{\sl topology}, is defined, see also \cite[\S10]{SuslinVoevodsky}. 
This topology is generated by a pretopology, where the coverings are those finite families $(p_i:U_i\to X)_i$ of 
morphisms of finite type that $\amalg p_i:\amalg U_i\to X$ 
are universal topological epimorphisms (i.e. a subset of $X$ is open if and only if so is its preimage, 
and any base change has the same property). 

A precosheaf $\mathcal F$ of abelian groups on the category of schemes 
of finite type over $k$ is an $h$-{\sl cosheaf} if the sequence 
$\mathcal F(U\times_XU)\xrightarrow{f_*\circ\mathrm{pr}_{1*}-f_*\circ\mathrm{pr}_{2*}}
\mathcal F(U)\xrightarrow{f_*}\mathcal F(X)\to 0$ is exact for any $h$-covering $f:U\to X$. 
By an $h$-cosheaf on the category of smooth varieties over $k$ we mean the restriction 
of an $h$-cosheaf on the category of schemes of finite type over $k$. 

The following lemma is related somehow to \cite[Prop.3.1.3]{Voevodsky}, where $f$ is a Nisnevich cover. 
\begin{lemma} \label{Z_0_is_h-cosheaf} If a quasi-compact morphism of schemes $Y\xrightarrow{f}X$ 
is surjective $($on the sets of points$)$ then \begin{itemize} \item it is surjective on the sets of closed points; 
\item the sequence $Z_0(Y\times_XY)\xrightarrow{f_*\circ\mathrm{pr}_{1*}-f_*\circ\mathrm{pr}_{2*}}Z_0(Y)
\xrightarrow{f_*}Z_0(X)\to 0$ is exact. In particular, $X\mapsto Z_0(X)$ is an $h$-cosheaf. 
\end{itemize} \end{lemma}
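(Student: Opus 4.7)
My plan is to handle the two bullets in turn; the cosheaf claim then drops out by replacing a general finite-type $h$-covering $(p_i\colon U_i\to X)_{i=1}^n$ with the single quasi-compact surjection $\coprod p_i\colon\coprod_iU_i\to X$, using that $Z_0(\coprod_iU_i)=\bigoplus_iZ_0(U_i)$ and $\coprod_iU_i\times_X\coprod_jU_j=\coprod_{i,j}U_i\times_XU_j$.

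For the first bullet, I would fix a closed point $x\in X$ and pick an affine open neighbourhood $U\subseteq X$ of it. Quasi-compactness of $f$ makes $f^{-1}(U)$ quasi-compact, so its closed subset $f^{-1}(x)$ is a non-empty quasi-compact topological space. Zorn's lemma applied to the family of non-empty closed subsets of $f^{-1}(x)$ ordered by reverse inclusion then produces a point $y$ closed in $f^{-1}(x)$. Continuity of $f$ forces the closure of $\{y\}$ in $Y$ to lie in $f^{-1}(\overline{\{x\}})=f^{-1}(x)$, so this closure coincides with the closure of $\{y\}$ in $f^{-1}(x)$, namely $\{y\}$; that is, $y$ is closed in $Y$.

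For the exact sequence, surjectivity of $f_*$ is then immediate: for each closed $x\in X$ lift to a closed $y\in Y$ with $f(y)=x$, giving $f_*[y]=[\kappa(y):\kappa(x)]\cdot[x]$ with finite residue degree, and the $\mathbb Q$-coefficients let us divide to recover $[x]$. The inclusion $\mathrm{im}(\mathrm{pr}_{1*}-\mathrm{pr}_{2*})\subseteq\ker f_*$ is the identity $f\circ\mathrm{pr}_1=f\circ\mathrm{pr}_2$. For the converse, I would write any $\alpha\in\ker f_*$ as $\alpha=\sum_x\alpha_x$ by grouping the closed points of $Y$ supporting $\alpha$ according to their images in $X$; linear independence of the closed points of $X$ in $Z_0(X)$ forces $f_*\alpha_x=0$ for each $x$. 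Since $Y_x\times_{\kappa(x)}Y_x$ is a closed subscheme of $Y\times_XY$ through which all the witnesses I shall construct will factor, this reduces the exactness claim to the case $X=\mathrm{Spec}(K)$ a single closed point.

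In this reduced case $f_*$ is the degree map $[y]\mapsto[\kappa(y):K]$, and its kernel is the $\mathbb Q$-span of the elements $(1/d_i)[y_i]-(1/d_j)[y_j]$ with $d_i=[\kappa(y_i):K]$ and $y_i,y_j$ ranging over closed points of $Y$. For any such pair, the subscheme $\mathrm{Spec}(\kappa(y_i)\otimes_K\kappa(y_j))\subseteq Y\times_KY$ is a non-zero finite-dimensional $K$-algebra, hence artinian, so it contains a closed point $z$ with residue field $M$ finite over $K$ of some degree $e$; a direct computation gives $(\mathrm{pr}_{1*}-\mathrm{pr}_{2*})[z]=(e/d_i)[y_i]-(e/d_j)[y_j]$, so dividing by $e$ exhibits the required preimage of the generator. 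The hard part is precisely this last calculation together with the division-by-$e$ step, which is why we must work with $\mathbb Q$-coefficients and the statement would fail integrally.
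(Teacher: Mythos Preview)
Your argument is correct and follows the same overall strategy as the paper, with two differences worth noting. For the first bullet you invoke Zorn's lemma on the non-empty closed subsets of the quasi-compact fibre $f^{-1}(x)$; the paper instead gives an explicit construction, covering $f^{-1}(x)$ by finitely many affine opens $U_1,\dots,U_N$ and recursively producing points $q_1,q_2,\dots$, each closed in its own affine chart and lying in the closure of the previous one, until some $q_n$ has its closure trapped in $U_1\cup\dots\cup U_n$ and is therefore closed. Both are standard ways to extract a closed point from a quasi-compact scheme; yours is shorter, the paper's avoids choice.

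For the exactness, both proofs rest on choosing a closed point of $q\times_Xq'\subseteq Y\times_XY$. The paper compresses this into the two sentences ``the kernel of $f_*$ is spanned by the elements $q-q'$'' and ``$q-q'$ is the image of any closed point of $q\times_Xq'$'', which taken literally ignore the residue-degree weights appearing in $f_*$ and in $\mathrm{pr}_{i*}$ (neither assertion is correct as written when $[\kappa(q):\kappa(p)]\neq[\kappa(q'):\kappa(p)]$). Your reduction to $X=\mathrm{Spec}(K)$ and your explicit computation $(\mathrm{pr}_{1*}-\mathrm{pr}_{2*})[z]=(e/d_i)[y_i]-(e/d_j)[y_j]$, followed by division by $e$, make this step rigorous; your closing remark on why $\mathbb{Q}$-coefficients are essential is exactly the content the paper's shorthand obscures.
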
 
\begin{proof} Let $p$ be a closed point of $X$. Then $Y_p:=f^{-1}(p)$ is a non-empty closed subset of $Y$, 
so it suffices to show the existence of a closed point of $Y_p$. Suppose on the contrary that there are 
no closed points in $Y_p$. As $Y_p$ is quasi-compact, it can be covered by a finite collection $S$ of 
affine opens: $Y_p=\bigcup_{U\in S}U$. Let us construct recursively a sequence of points $q_i\in X$ and 
a sequence $U_1,U_2,\dots$ of elements of $S$ as follows: let $U_1$ be an arbitrary element of $S$, $q_1$ 
be an arbitrary closed point of $U_1$; for $i>1$, if the closure $\overline{\{q_{i-1}\}}$ of $q_{i-1}$ 
is not contained in $U_1\cup\dots\cup U_{i-1}$, let (i) $q_i'$ be a point of $\overline{\{q_{i-1}\}}$ 
in the complement of $\bigcup_{j=1}^{i-1}U_j$, (ii) $U_i$ be an element of $S$ containing $q_i'$, 
(iii) $q_i$ be a closed point of $\overline{\{q_i'\}}\cap U_i$: $\overline{\{q_i\}}\cap U_i=\{q_i\}$. 

Then $\overline{\{q_i\}}\cap(U_1\cup\dots\cup U_j)$ is a subset of 
$\{q_1,\dots,q_j\}$ for any $j\le i$. 
As $S$ is finite, there is some $1\le n\le\#S$ such that $\overline{\{q_n\}}$ is contained 
in $U_1\cup\dots\cup U_n$. As the complement of $\bigcup_{j=1}^{n-1}U_j$ is closed, the set 
$\{q_n\}=\overline{\{q_n\}}\cap(X\smallsetminus(\bigcup_{j=1}^{n-1}U_j))\subseteq U_n$ 
is closed as well. 

The kernel of $f_*$ is spanned by the elements $q-q'$ for all closed points $q,q'$ of $Y$ such that 
$f(q)=f(q')$. But $q-q'$ is the image of any closed point of $q\times_Xq'\subseteq Y\times_XY$. \end{proof} 

\begin{remark} The proof of Lemma~\ref{Z_0_is_h-cosheaf} can be obviously modified to show that 
the linear combinations of $k$-rational points on $k$-schemes of finite type ($X\mapsto\mathbb Q[X(k)]$) 
form a Nisnevich subcosheaf 
without transfers (i.e. functorial with respect to the morphisms of schemes, not with respect to the finite 
correspondences) of the $h$-cosheaf with transfers $Z_0:X\mapsto Z_0(X)$. \end{remark} 

Lemma~\ref{Z_0_is_h-cosheaf} suggests that, in the non-proper case, the category of $h$-cosheaves is 
more appropriate than the much bigger category of $A_S$-modules. Then the natural guess is that the 
socle $\mathrm{Soc}(Z_0)$ of the $h$-cosheaf $Z_0$ is simple and consists of those 0-cycles that become 
rationally trivial on some smooth compactifications, while the radical filtration on $Z_0/\mathrm{Soc}(Z_0)$ 
is separable and coincides with the motivic one.

\vspace{5mm}

\noindent 
{\sl Acknowledgements.} {\small The study has been funded within the framework of the HSE University
Basic Research Program. Discussions with Vadim Vologodsky, Sergey Gorchinskiy, Dmitry Kaledin, 
and especially with Ivan Panin, were very helpful for me.}

\vspace{5mm}

\end{document}